\newtheorem{theorem}{Theorem}[section]
\newtheorem{lemma}{Lemma}[section]
\newtheorem{definition}{Definition}[section]
\newtheorem{proposition}[theorem]{Proposition}
\newtheorem{corollary}[theorem]{Corollary}
\newtheorem{example}{Example}[section]
\newcommand{\eop }{ \hfill $\Box$ }
\newcommand{\paragrafo}{\vrule height 0pt width 0pt depth 0pt\hbox to\parindent{\hss}}
\begin{document}

\begin{center}
{\Large A Pluzhnikov's Theorem, Brownian motions and Martingales in Lie Group with skew-symmetric connections}

\end{center}

\vspace{0.3cm}

\begin{center}
{\large Sim\~ao Stelmastchuk }\\

\textit{Universidade Estadual de Campinas, 13.081-970 -  Campinas - SP, Brazil. e-mail: simnaos@gmail.com}
\end{center}

\vspace{0.3cm}

\begin{abstract}
Let $G$ be a Lie Group with a left invariant connection such that its connection function is skew-symmetric. Our main goal is to show a version of Pluzhnikov's Theorem for this kind of connection. To this end, we use the stochastic logarithm. More exactly, the stochastic logarithm gives characterizations for Brownian motions and Martingales in $G$, and these characterzations are used to prove Pluzhnikov's Theorem.
\end{abstract}

\noindent {\bf Key words:} harmonic maps; Lie groups; stochastic analisys on manifolds.

\vspace{0.3cm} \noindent {\bf MSC2010 subject classification:}
 53C43, 58E20,  58J65, 60H30, 60G48

\section{Introduction}

Let $G$ be a Lie group, $\mathfrak{g}$ its lie algebra and $\omega_{G}$ the Maurer-Cartan form on $G$. K. Nomizu, in \cite{nomizu}, has proved that  there is an one-to-one association between left invariant connection in $G$ and bilinear applications $\alpha$ from $\mathfrak{g}\otimes \mathfrak{g}$ into $\mathfrak{g}$, which is called connection function. In this work, we are only interested in  skew-symmetric connection functions.

Our main goal is to prove a version of the following Theorem, in the Riemannian case, that was proved by A.I. Pluzhnikov in \cite{pluzhnikov}.\\

\noindent{\bf Theorem} {\it Let $M$ be a Riemannian manifold, $G$ a Lie group with a left invariant connection $\nabla^{G}$ such that its connection function $\alpha$ is skew-symmetric, $\omega_{G}$ the Maurer-Cartan form on $G$ and $F:M \rightarrow G$ a smooth map. Then $F$ is harmonic if and only if
\[
 d^{*}F^{*}\omega_{G} = 0,
\]
where $d^{*}$ is the co-differential operator on $M$.
}\\

The proof of this Theorem is based in a stochastic tool: the stochastic logarithm. It was introduced, in \cite{hawkim}, by  M. Hawkim-Dowek and D. L\'epingle. Being $X_{t}$ a semimartingale with valued in $G$, the stochastic logarithm, denoted by $\log X_{t}$, is a semimartingale in the Lie algebra $\mathfrak{g}$. 

The key of proof of Theorem above is the characterization of martingales and Brownian motions in terms of stochastic logarithm. In fact, if we take a left invariant connection on $G$ with skew-symmetric connection function $\alpha$ or a bi-invariant metric $k$, we have the following:\\

\noindent{\bf Theorem: }{\it
{\bf (i)}{\it A $G$-valued semimartingale $X_{t}$ is a $\nabla^{G}$-martingale if and only if $\log X_{t}$ is a $\nabla^{\mathfrak{g}}$-martingale, where $\nabla^{\mathfrak{g}}$ is the connection on $\mathfrak{g}$ given by $\alpha$.}\\
{\bf (ii)} {\it A semimartingale $B_{t}$ is a $k$-Brownian motion if and only in $\log B_{t}$ is a $<,>$-Brownian motion, where $<,>$ is the scalar product in $\mathfrak{g}$ associated to $k$.}}\\

This paper is organized as follow: in section 2 we give a brief exposition of stochastic calculus on manifold. In section 3 our main results are stated and proved.

\section{Preliminaries}

We begin by recalling some fundamental facts on stochastic calculus on manifolds, we shall use freely concepts and notations of M. Emery \cite{emery1} and  P. Protter \cite{protter}. In \cite{catuogno}, we find a complete survey of the stochastic properties in this section.

Let $(\Omega, (\mathcal{F}_t),\mathbb{P})$ be a filtered probability space with usual hypothesis (see for instance \cite{emery1}). In this work we mean smooth as $C^{\infty}$.

\begin{definition}
Let $M$ be a differential manifold and $X_{t}$ a continuos stochastic process with values in $M$. We call $X_{t}$ a semimartingale if, for all $f$ smooth function, $f(X_{t})$ is a real semimartingale.
\end{definition}

Let $M$ be a smooth manifold with a connection $\nabla^{M}$, $X_{t}$ a semimartingale with values in $M$, $\theta$ a section of $TM^*$ and $b$ a section of $T^{(2,0)}M$. The Stratonovich integral of $\theta$ along $X_{t}$ is denoted by $\int_0^t\theta \delta X_{s}$, the It\^o integral of $\theta$ along $X$ by $\int_0^t\theta d^{\nabla^{M}} X_{s}$. Let $(U,x_{1},\ldots,x_{n})$ be a local coordiante system on $M$. Then in $U$ we can write $b = b_{ij} dx^{i} \otimes dx^{j}$, where $b_{ij}$ are smooth functions on $U$. We define the quadratic integral of $b$ along of $X_{t}$, locally, by 
\[
\int_0^{t}b\;(dX,dX)_{s} = \int_{0}^{t} b_{ij}(X_{s}) d[X^{i}_{s},X_{s}^{j}],
\]
where $X^{i} = x_{i} \circ X$, for $i=1, \ldots n$.

\begin{definition}
Let $M$ be a smooth manifold with a connection $\nabla^{M}$. A semimartingale $X$ with values in $M$ is called a $\nabla^{M}$-martingale if $\int \theta \;d^{\nabla^{M}} X$ is a real local martingale for all $\theta \in \Gamma(TM^*)$. 
\end{definition}

\begin{definition}
Let $M$ be a Riemannian manifold with a metric $g$. Let $B$ be a semimartingale with values in $M$. We say that $B$ is a $g$-Brownian motion in $M$ if $B$ is a $\nabla^{g}$-martingale, being $\nabla^{g}$ the Levi-Civita connection of $g$, and for any section $b$ of $T^{(2,0)}M$ we have 
\begin{equation}\label{Brownian}
\int_0^tb(dB,dB)_{s}=\int_0^t \mathrm{tr}\,b(B_s)ds.
\end{equation}
\end{definition}

Following, we state the stochastic tools that are necesary to establish our main results. Firstly, we observed that
\[
\int_0^tb\;(dX,dX)_{s}=\int_0^tb^s\;(dX,dX)_{s},
\]
where 
$b^s$ is the symmetric part of $b$.

Let $M$ be a smooth manifold with a connection $\nabla^{M}$ and $\theta$ a section of $TM^*$. We have the Stratonovich-It\^o formula of conversion
\begin{equation}\label{conversion}
\int_0^t\theta \delta X_{s} = \int_0^t\theta d^{\nabla^{M}} X_{s}+\frac{1}{2}\int_0^t\nabla^{M}\theta\;(dX,dX)_{s}.
\end{equation}

When $(M,g)$ is a Riemannian manifold and $B_{t}$ is a $g$-Brownian motion in $M$ we deduce from (\ref{Brownian}) and (\ref{conversion}) the Manabe's formula:
\begin{equation}\label{manabe}
\int_0^t\theta \delta B_{s} = \int_0^t\theta d^{\nabla^{M}} B_{s}+\frac{1}{2}\int_0^t d^{*}\theta(B_{s})ds,
\end{equation}
where $d^{*}$ is the co-differential on $M$.

Let $M$ and $N$ be manifolds, $\theta$ be  a section of $TN^*$, $b$ be a section of $T^{(2,0)}N$ and $F:M\rightarrow N$ be a smooth map. For a semimartingale $X_{t}$ in $M$, we have the following It\^ o formulas for Stratonovich and quadratic integrals:
\begin{equation}\label{stratonovich-ito}
\int_0^t\theta \;\delta F(X)=\int_0^tF^*\theta \;\delta X
\end{equation}
and
\begin{equation}\label{quadratic-ito}
\int_0^tb\;(dF(X),dF(X))=\int_0^tF^*b\;(dX,dX).
\end{equation}

Let $M$ and $N$ be smooth manifolds endowed with connections $\nabla^{M}$ and $\nabla^{N}$, respectively. Let $F:M \rightarrow N$ be a smooth map and $F^{-1}(TN)$ the induced bundle. We denote by $\nabla^{N'}$ the unique connection on $F^{-1}(TN)$ induced by $\nabla^{N}$ (see for example Proposition I.3.1 in \cite{nore}). The bilinear mapping $\beta_{F}: TM \times TM \rightarrow TN$ defined by
\begin{equation}\label{secondform}
 \beta_{F}(X,Y) = \nabla^{N'}_{X}F_{*}(Y) - F_{*}(\nabla^{M}_{X}Y)
\end{equation}
is called the second fundamental form of $F$ (see for example definition I.4.1.1 in \cite{nore}). $F$ is said affine map if $\beta_{F}$ is null.

When $(M,g)$ is a Riemannian manifold, we define the tension field $\tau_{F}$ of $F$ by $\tau_{F} = \mathrm{tr}\, \beta_{F}$. We call $F$ a harmonic map if $\tau_{F} \equiv 0$. We observe that $N$ is not necessarily a Riemannian manifold to define harmonic map. But this definition is an extension of one gives by energy functional.

Let $M$ and $N$ be smooth manifold with connections $\nabla^{M}$ and $\nabla^{N}$. The It\^o geometric formula is given by:
\begin{equation}\label{itoformula}
 \int_{0}^{t} \theta d^{\nabla^{N}}F(X_{s}) = \int_{0}^{t} F^{*}\theta d^{\nabla^{M}}X_{s} + \frac{1}{2} \int_{0}^{t} \beta_{F}^{*}\theta(dX,dX)_{s}.
\end{equation}
If $(M,g)$ is  Riemannian manifold and if $B_{t}$ is a $g$-Brownian motion in $M$, then, from It\^o geometric formula and (\ref{Brownian}) we deduce that
\begin{equation}\label{tensionformula}
 \int_{0}^{t} \theta d^{\nabla^{N}}F(B_{s}) = \int_{0}^{t} F^{*}\theta d^{\nabla^{M}}B_{s} + \frac{1}{2} \int_{0}^{t} \tau_{F}^{*}\theta(B_{s})ds.
\end{equation}

From It\^o geometric formula and Doob-Meyer decomposition from real semimartingales we deduce the following stochastic characterizations for affine a harmonic maps:\\
 {\bf(i)} $F$ is an affine map if and only if it sends $\nabla^{M}$-martingales to \linebreak $\nabla^{N}$-martingales.\\
 {\bf(ii)} If $(M,g)$ is a Riemmanian manifold, then $F:M \rightarrow N$ is a harmonic map if and only if it sends $g$-Brownian motions to $\nabla^{N}$-martingales.

\section{Pluzhnikov's theorem, Brownian motions and martingales}

Let $G$ be a Lie group and $\mathfrak{g}$ its Lie algebra. Let us denote by $\omega_{G}$ the Maurer-Cartan form on $G$. Let $X_{t}$ be a semimartingale in $G$. The stochastic logarithm of the semimartingale $X_{t}$ (with $X_0 = e$) is the semimartingale, in the Lie algebra $\mathfrak{g}$, given by
\[
 \log X_{t} = \int_{0}^{t} \omega_{G} \delta X_{s}.
\]

For the convenience of the reader we repeat the following two results from \cite{catuogno2}, thus making our exposition self-contained.

\begin{lemma}\label{le1}
Let $G$ and $H$ be two Lie groups. If $\varphi:G \rightarrow H$ is a homomorphism then
\[
\varphi^{*}\omega_{H} = \varphi_{*}\omega_{G},
\]
where $\omega_{G}$ and $\omega_{H}$ be Maurer-Cartan form on $G$ and $H$, respectively.
\end{lemma}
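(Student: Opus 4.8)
The plan is to unwind both sides of the claimed identity pointwise, using only the definition of the Maurer--Cartan form together with the equivariance that characterizes a homomorphism. Recall that $\omega_{G}$ is the $\mathfrak{g}$-valued $1$-form on $G$ given by $\omega_{G}(v_{g}) = (L_{g^{-1}})_{*}v_{g}$ for $v_{g} \in T_{g}G$, where $L_{g}$ denotes left translation, and similarly for $\omega_{H}$. On the right-hand side, $\varphi_{*}\omega_{G}$ means the induced Lie algebra homomorphism $\varphi_{*e}:\mathfrak{g} \rightarrow \mathfrak{h}$ applied to the $\mathfrak{g}$-valued form $\omega_{G}$.

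First I would record the fundamental consequence of $\varphi$ being a homomorphism: for every $g \in G$ one has $\varphi \circ L_{g} = L_{\varphi(g)} \circ \varphi$, since $\varphi(gh) = \varphi(g)\varphi(h)$. Differentiating this identity at the identity $e$ and using $\varphi(e)=e$ yields the intertwining relation
\[
\varphi_{*g} \circ (L_{g})_{*e} = (L_{\varphi(g)})_{*e} \circ \varphi_{*e}
\]
between the differentials, regarded as maps $T_{e}G \rightarrow T_{\varphi(g)}H$.

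Next I would fix $v_{g} \in T_{g}G$ and write it as $v_{g} = (L_{g})_{*e}w$ with $w := \omega_{G}(v_{g}) = (L_{g^{-1}})_{*}v_{g} \in \mathfrak{g}$. Computing the left-hand side of the lemma gives
\[
(\varphi^{*}\omega_{H})(v_{g}) = \omega_{H}(\varphi_{*g}v_{g}) = (L_{\varphi(g)^{-1}})_{*}\,\varphi_{*g}(L_{g})_{*e}w.
\]
Applying the intertwining relation to rewrite $\varphi_{*g}(L_{g})_{*e}w = (L_{\varphi(g)})_{*e}\varphi_{*e}w$, the two left translations cancel and I obtain $\varphi_{*e}w = \varphi_{*e}(\omega_{G}(v_{g})) = (\varphi_{*}\omega_{G})(v_{g})$, which is exactly the right-hand side.

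Since $g$ and $v_{g}$ were arbitrary, this establishes $\varphi^{*}\omega_{H} = \varphi_{*}\omega_{G}$. The argument is essentially a direct computation, so there is no genuine obstacle; the only point demanding care is the bookkeeping of base points, namely keeping track that $\varphi_{*g}$, $\varphi_{*e}$, and the various left translations are differentials taken at the correct points, so that the cancellation $L_{\varphi(g)^{-1}} \circ L_{\varphi(g)} = \mathrm{id}$ is applied legitimately.
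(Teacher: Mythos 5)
Your proof is correct and follows essentially the same route as the paper: both differentiate the intertwining identity between $\varphi$ and left translations (the paper uses $\varphi\circ L_{g^{-1}}=L_{\varphi(g)^{-1}}\circ\varphi$ directly, you use $\varphi\circ L_{g}=L_{\varphi(g)}\circ\varphi$ and then cancel) and read off the equality of the two forms pointwise. Your version is somewhat more carefully written with respect to base points, but the underlying argument is identical.
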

\begin{proof}
Once $\varphi(L_{g^{-1}}(h))=L_{\varphi(g)^{-1}}(\varphi(h))$, chain rule implies that
\[
L_{\varphi(g)^{-1}*}(\varphi_{*}(v)) =\varphi_{*}(L_{g^{-1}*}(v)). 
\]
\end{proof}

\begin{proposition}\label{prop1}
Let $G$ and $H$ be two Lie groups and $\varphi:G \rightarrow H$ be a homomorphism of Lie groups. If $X_{t}$ is a $G$-valued semimartingale then
\[
\log \varphi(X_{t}) = \varphi_{*}\log X_{t}.
\]
\end{proposition}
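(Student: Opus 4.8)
The plan is to unfold the definition of the stochastic logarithm for $\varphi(X_t)$ and then transport the resulting Stratonovich integral back to $X_t$ through $\varphi$, letting Lemma \ref{le1} do the geometric work. Since $\varphi$ is a homomorphism we have $\varphi(e)=e$, so $\varphi(X_t)$ is an $H$-valued semimartingale based at the identity and the logarithm is defined. By the very definition of the stochastic logarithm,
\[
\log \varphi(X_t) = \int_0^t \omega_H\, \delta \varphi(X_s).
\]
First I would apply the It\^o transfer formula for Stratonovich integrals (\ref{stratonovich-ito}), taking $N=H$, $\theta=\omega_H$, and $\varphi$ in the role of $F$, which gives
\[
\int_0^t \omega_H\, \delta \varphi(X_s) = \int_0^t \varphi^{*}\omega_H\, \delta X_s.
\]

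Next I would invoke Lemma \ref{le1}, which supplies the identity $\varphi^{*}\omega_H = \varphi_{*}\omega_G$; substituting turns the integrand into $\varphi_{*}\omega_G$. The final step is to pull the linear map $\varphi_{*}=d\varphi_e:\mathfrak{g}\to\mathfrak{h}$ outside the integral. Because $\varphi$ is a homomorphism, $\varphi_{*}$ is a single fixed linear map on the Lie algebra, independent of the point of $G$, so choosing a basis of $\mathfrak{h}$ and arguing componentwise shows that it commutes with the vector-valued Stratonovich integral:
\[
\int_0^t (\varphi_{*}\omega_G)\, \delta X_s = \varphi_{*}\int_0^t \omega_G\, \delta X_s = \varphi_{*}\log X_t.
\]
Chaining the three displays yields the claim.

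I do not expect a genuine obstacle: all the geometric content is already packaged in Lemma \ref{le1} and formula (\ref{stratonovich-ito}), and what remains is the elementary fact that a constant linear map commutes with a vector-valued stochastic integral. The only point meriting a word of care is the interpretation of the $\mathfrak{h}$-valued integral $\int \varphi_{*}\omega_G\, \delta X$ coordinatewise in a basis, so that linearity of the real-valued Stratonovich integral can be applied in each component before reassembling.
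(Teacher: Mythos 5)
Your proposal is correct and follows exactly the paper's own argument: unfold the definition of the logarithm, transfer the Stratonovich integral via (\ref{stratonovich-ito}), substitute $\varphi^{*}\omega_{H}=\varphi_{*}\omega_{G}$ using Lemma \ref{le1}, and pull the constant linear map $\varphi_{*}$ out of the integral. The only difference is that you spell out the (componentwise) justification for the last step, which the paper compresses into a single ``Thus.''
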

\begin{proof}
Let $\omega_{G}$ and $\omega_{H}$ be the Maurer-Cartan form on $G$ and $H$, respectively. From (\ref{stratonovich-ito}) we see that  $\log \varphi(X_{t})  = \int_{0}^{t} \varphi^{*}\omega_{H}\delta X_{s}$. Applying Lemma \ref{le1} we obtain $\log \varphi(X_{t}) = \int_{0}^{t} \varphi_{*}\omega_{G}\delta X_{s}$.
Thus, $\log \varphi(X_{t}) = \varphi_{*}\log X_{t}$.
\eop
\end{proof}

In \cite{nomizu}, K. Nomizu proved the existence of correspondence between left invariant connections $\nabla^{G}$ on $G$ and bilinear applications $\alpha:\mathfrak{g}\otimes  \mathfrak{g} \rightarrow \mathfrak{g}$, which is  given by $\nabla^{G}_{X}Y = \alpha(X,Y)$ for all $X,Y \in \mathfrak{g}$. The bilinear application $\alpha$ is called the connection function associated to $\nabla^{G}$.

\begin{proposition}
For every bilinear application $\alpha:\mathfrak{g}\otimes  \mathfrak{g} \rightarrow \mathfrak{g}$ there exists only one connection $\nabla^{\mathfrak{g}}$ associated to $\alpha$.
\end{proposition}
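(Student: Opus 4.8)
The plan is to exploit the fact that $\mathfrak{g}$, as a finite-dimensional real vector space, is itself a manifold whose tangent bundle is canonically trivial: for each $v \in \mathfrak{g}$ there is a canonical identification $T_v\mathfrak{g} \cong \mathfrak{g}$, so that a vector field on $\mathfrak{g}$ is the same datum as a smooth map $\mathfrak{g} \rightarrow \mathfrak{g}$. Under this identification the constant maps are exactly the left-invariant vector fields of the additive group $(\mathfrak{g},+)$, and they span $T_v\mathfrak{g}$ at every point. The phrase ``connection associated to $\alpha$'' then means a connection $\nabla^{\mathfrak{g}}$ on the manifold $\mathfrak{g}$ with $\nabla^{\mathfrak{g}}_X Y = \alpha(X,Y)$ whenever $X,Y$ are constant vector fields; equivalently, it is the left-invariant connection on $(\mathfrak{g},+)$ whose Nomizu connection function is $\alpha$. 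Read this way, the statement is just the specialization of the Nomizu correspondence to the abelian group $(\mathfrak{g},+)$, whose Lie algebra is $\mathfrak{g}$ with trivial bracket; but I would give a direct construction to keep the argument self-contained.

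For existence, I would let $D$ denote the canonical flat connection on $\mathfrak{g}$, that is, $D_XY$ is the ordinary directional derivative of $Y$ (regarded as a $\mathfrak{g}$-valued function) in the direction $X$, and set
\[
\nabla^{\mathfrak{g}} = D + A, \qquad A(X,Y)(v) := \alpha\big(X(v),Y(v)\big).
\]
Since $\alpha$ is $\mathbb{R}$-bilinear and $A$ is defined pointwise through the trivialization, $A$ is $C^{\infty}(\mathfrak{g})$-bilinear, hence a section of $T^{(1,2)}\mathfrak{g}$. The sum of a connection and a $(1,2)$-tensor field is again a connection, so $\nabla^{\mathfrak{g}}$ inherits the two defining axioms. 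Finally, $D$ annihilates constant vector fields, so on such fields $\nabla^{\mathfrak{g}}_XY = A(X,Y) = \alpha(X,Y)$, as required.

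For uniqueness, I would use that the difference of any two connections is a $(1,2)$-tensor field. If $\nabla^{1}$ and $\nabla^{2}$ both restrict to $\alpha$ on constant vector fields, then $T := \nabla^{1}-\nabla^{2}$ is tensorial and vanishes on all constant fields. Because the constant fields span $T_v\mathfrak{g}$ for every $v$, and $T$ is determined pointwise by its values on a spanning family of each tangent space, we get $T \equiv 0$, whence $\nabla^{1}=\nabla^{2}$.

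There is no serious obstacle here: the result is essentially the construction-and-uniqueness half of the Nomizu correspondence read off on the flat manifold $\mathfrak{g}$. The only points demanding care are, first, checking that $A$ is genuinely tensorial --- which is exactly where the $\mathbb{R}$-bilinearity of $\alpha$ is used, guaranteeing $C^{\infty}(\mathfrak{g})$-linearity in each slot --- and, second, justifying in the uniqueness step that a tensor vanishing on the spanning family of constant vector fields must vanish identically. Both reduce to the canonical trivialization of $T\mathfrak{g}$ together with the pointwise nature of tensor fields.
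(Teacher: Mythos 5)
Your proof is correct, and it rests on the same underlying idea as the paper's: a connection associated to $\alpha$ is determined by its values on the constant (translation-invariant) vector fields of the vector space $\mathfrak{g}$. The packaging differs in a way worth noting. The paper defines $\nabla^{\mathfrak{g}}_{X}Y=\alpha(X,Y)$ on such fields, writes down the Leibniz extension rules $\nabla^{\mathfrak{g}}_{fX}Y=f\alpha(X,Y)$ and $\nabla^{\mathfrak{g}}_{X}(fY)=X(f)Y+f\alpha(X,Y)$, and simply declares that it is clear this yields a connection; uniqueness is not argued at all (the ``conversely'' portion of the paper's proof constructs $\alpha$ from a given connection, i.e.\ the inverse of the correspondence, rather than showing that two connections inducing the same $\alpha$ coincide). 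You instead write $\nabla^{\mathfrak{g}}=D+A$ with $D$ the canonical flat connection and $A$ the $(1,2)$-tensor obtained from $\alpha$ via the trivialization $T\mathfrak{g}\cong\mathfrak{g}\times\mathfrak{g}$, which makes the existence of a globally well-defined connection immediate, and your uniqueness step --- the difference of two such connections is a tensor vanishing on a pointwise-spanning family of vector fields, hence identically zero --- supplies precisely the half of the statement that the paper leaves implicit. In short, your argument establishes the same result by the same mechanism, but more completely and at no extra cost.
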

\begin{proof}
 Let $X,Y$ be a vector fields in $\mathfrak{g}$. We define
\[
 \nabla^{\mathfrak{g}}_{X}Y = \alpha(X,Y),
\]
and
\[
 \nabla^{\mathfrak{g}}_{fX}Y = f\alpha(X,Y) \textrm{ and } \nabla^{\mathfrak{g}}_{X}fY = X(f)Y + f\alpha(X,Y),
\]
for $f$ smooth function on $\mathfrak{g}$. It is clear that $\nabla^{\mathfrak{g}}$ is a connection. Conversely, let $\nabla^{\mathfrak{g}}$ be a connection on $\mathfrak{g}$. Then it is sufficient to define $\alpha:\mathfrak{g}\otimes  \mathfrak{g} \rightarrow \mathfrak{g}$ as
\[
 \alpha(X,Y) = \nabla^{\mathfrak{g}}_{X}Y.
\]
It is obvious that $\alpha$ is bilinear.
\eop
\end{proof}

From now on we only work with skew-symmetric bilinear application $\alpha:\mathfrak{g}\otimes  \mathfrak{g} \rightarrow \mathfrak{g}$, and we call the associated connections $\nabla^{G}$ and $\nabla^{\mathfrak{g}}$ to $\alpha$ the skew-symmetric connections.

\begin{lemma}\label{le2}
Let $\nabla^{G}$ be a left invariant connection on $G$ and $\nabla^{\mathfrak{g}}$ a connection on $\mathfrak{g}$ such that its connection function $\alpha$ is skew-symmetric. 
\begin{enumerate}
 \item If $\theta$ is a left-invariant 1-form on $G$, then the symmetric part of $\nabla^{G} \theta$ is null.
 \item If $\theta$ is a 1-form in $\mathfrak{g}^{*}$, then the symmetric part of $\nabla^{\mathfrak{g}}\theta$ is null.
\end{enumerate}
\end{lemma}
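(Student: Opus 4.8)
The plan is to use the standard formula for the covariant derivative of a 1-form,
$(\nabla_X\theta)(Y) = X(\theta(Y)) - \theta(\nabla_X Y)$,
together with the fact that $\nabla\theta$ is a genuine $(0,2)$-tensor, so that its symmetric part
$(\nabla\theta)^s(X,Y) = \frac{1}{2}\left[(\nabla_X\theta)(Y) + (\nabla_Y\theta)(X)\right]$
can be checked on any convenient frame rather than on arbitrary vector fields.

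For item 1, I would first note that $(\nabla^{G}\theta)^s$ is $C^{\infty}$-bilinear in both arguments, hence it suffices to evaluate it on left-invariant vector fields, which span the tangent space at every point of $G$. Now the key observation is that if $\theta$ is a left-invariant 1-form and $X,Y$ are left-invariant vector fields, then the function $\theta(Y)$ is constant on $G$ (since $\theta_g(Y_g) = (L_g^{*}\theta)_e(Y_e) = \theta_e(Y_e)$ by left-invariance), so $X(\theta(Y)) = 0$. Consequently $(\nabla^{G}_X\theta)(Y) = -\theta(\nabla^{G}_X Y) = -\theta(\alpha(X,Y))$, using that $\nabla^{G}_X Y = \alpha(X,Y)$ on left-invariant fields. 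Substituting into the symmetric part yields $-\frac{1}{2}\theta\big(\alpha(X,Y) + \alpha(Y,X)\big)$, which vanishes precisely because $\alpha$ is skew-symmetric.

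Item 2 has exactly the same structure, with left-invariant objects on $G$ replaced by constant ones on the vector space $\mathfrak{g}$ regarded as a manifold: for constant vector fields $X,Y$ and $\theta \in \mathfrak{g}^{*}$, the function $\theta(Y)$ is constant, $\nabla^{\mathfrak{g}}_X Y = \alpha(X,Y)$ by the definition of $\nabla^{\mathfrak{g}}$, and the same cancellation $\alpha(X,Y) + \alpha(Y,X) = 0$ forces the symmetric part to be zero. Since constant vector fields form a global frame, this is enough.

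The computation itself is routine once the correct formula for $\nabla\theta$ is in place; the only point requiring care is the reduction to a frame, that is, justifying that vanishing of the symmetric part on left-invariant (respectively constant) vector fields implies its vanishing as a tensor. I expect this tensoriality bookkeeping, rather than any genuine analytic or algebraic difficulty, to be the main thing to state cleanly.
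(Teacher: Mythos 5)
Your proposal is correct and follows essentially the same route as the paper: reduce to left-invariant (respectively constant) vector fields by tensoriality, apply the dual-connection formula so the symmetric part becomes $-\tfrac{1}{2}\theta(\alpha(X,Y)+\alpha(Y,X))$, and conclude by skew-symmetry of $\alpha$. Your version is slightly more explicit than the paper's in justifying why the derivative terms $X(\theta(Y))$ vanish, but the argument is the same.
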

\begin{proof}
\noindent {\it 1.} We first observe that $\nabla\theta$ is a tensor, so it is sufficiente to proof for $X, Y \in \mathfrak{g}$. Let us denote $S\nabla\theta$ the symmetric part of $\nabla\theta$. By definition of dual connection,
\begin{eqnarray*}
S\nabla \theta(X,Y)(g) 
& =  & \frac{1}{2}(X\theta(Y) + Y\theta(X) - \theta(\nabla_{X}Y +\nabla_{Y}X)(g))\\
& =  & - \frac{1}{2}\theta(\alpha(X,Y) + \alpha(Y,X))(g)
\end{eqnarray*}
Since $\alpha$ is skew-symmetric, $S\nabla \theta(X,Y)= 0$ 

\noindent {\it 2.} The proof is similar to item 1.
\eop
\end{proof}

We now prove a characterization of martingales with values in $G$ through association with martingales with values in $\mathfrak{g}$.

\begin{theorem}\label{teo1}
Let $G$ be a Lie group with a left invariant connection $\nabla^{G}$ and $\nabla^{\mathfrak{g}}$ a connection on Lie algebra $\mathfrak{g}$ such that its connection function $\alpha$ is skew-symmetric. Let $M_{t}$ be a $G$-valued semimartingale. Then $M_{t}$ is a $\nabla^{G}$-martingale if and only if $\log M_{t}$ is a $\nabla^{\mathfrak{g}}$-martingale.
\end{theorem}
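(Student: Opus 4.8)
The plan is to reduce both martingale conditions to a single family of scalar Itô integrals indexed by $\mathfrak{g}^{*}$, and then to show that these two families of integrals coincide term by term. Once that identity is in hand, the equivalence is automatic.

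First I would record the reduction step. The Itô integral $\int \theta\, d^{\nabla}X$ is tensorial in $\theta$, in the sense that for $\theta = f\,\theta'$ the integral $\int \theta\, d^{\nabla}X$ is the stochastic integral of the locally bounded predictable process $f(X)$ against $\int \theta'\, d^{\nabla}X$; since integrating such a process against a real local martingale again yields a local martingale, it suffices to test the $\nabla$-martingale property on any family of $1$-forms that spans the cotangent bundle pointwise. On $G$ I would take the left-invariant $1$-forms $\theta_{\xi} := \langle \xi, \omega_{G}\rangle = \xi\circ\omega_{G}$, $\xi\in\mathfrak{g}^{*}$, which form a global coframe; on the vector space $\mathfrak{g}$ I would take the constant $1$-forms $\xi\in\mathfrak{g}^{*}$, which span $T^{*}_{v}\mathfrak{g}$ at every $v$.

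Next, on each side I would convert Stratonovich integrals to Itô integrals via (\ref{conversion}) and annihilate the correction term using Lemma \ref{le2}, recalling that the quadratic integral depends only on the symmetric part of the tensor. On $\mathfrak{g}$, the correction term involves $\nabla^{\mathfrak{g}}\xi$, whose symmetric part vanishes by item 2 of Lemma \ref{le2}, so $\int_{0}^{t}\xi\, d^{\nabla^{\mathfrak{g}}}(\log M)_{s} = \int_{0}^{t}\xi\, \delta(\log M)_{s}$. Because $\xi$ is linear and $\log M$ is $\mathfrak{g}$-valued, the right-hand side equals $\xi(\log M_{t}) = \int_{0}^{t}\langle\xi,\omega_{G}\rangle\, \delta M_{s} = \int_{0}^{t}\theta_{\xi}\, \delta M_{s}$, using the very definition $\log M = \int \omega_{G}\,\delta M$. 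On $G$, applying (\ref{conversion}) to the left-invariant form $\theta_{\xi}$ and killing its correction term by item 1 of Lemma \ref{le2} gives $\int_{0}^{t}\theta_{\xi}\, \delta M_{s} = \int_{0}^{t}\theta_{\xi}\, d^{\nabla^{G}} M_{s}$. Chaining these equalities produces $\int_{0}^{t}\xi\, d^{\nabla^{\mathfrak{g}}}(\log M)_{s} = \int_{0}^{t}\theta_{\xi}\, d^{\nabla^{G}} M_{s}$ for every $\xi\in\mathfrak{g}^{*}$.

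The conclusion is then immediate from the reduction: $M$ is a $\nabla^{G}$-martingale if and only if every $\int \theta_{\xi}\, d^{\nabla^{G}}M$ is a real local martingale, while $\log M$ is a $\nabla^{\mathfrak{g}}$-martingale if and only if every $\int \xi\, d^{\nabla^{\mathfrak{g}}}(\log M)$ is a real local martingale; since these two families are equal, the equivalence follows. The step I expect to be the main obstacle is the justification of the reduction itself, namely that testing the martingale property on a spanning coframe rather than on all sections of the cotangent bundle is legitimate; this rests on the tensoriality of the Itô integral in its $1$-form argument and the stability of local martingales under integration by locally bounded predictable integrands. Everything downstream is a clean bookkeeping of (\ref{conversion}) and Lemma \ref{le2} together with the definition of the stochastic logarithm.
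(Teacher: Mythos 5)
Your proposal is correct and follows essentially the same route as the paper: the same chain of identities $\int \xi\, d^{\nabla^{\mathfrak{g}}}\log M = \int \xi\,\delta \log M = \int \theta_{\xi}\,\delta M = \int \theta_{\xi}\, d^{\nabla^{G}}M$ via the conversion formula (\ref{conversion}), Lemma \ref{le2}, and the definition of the stochastic logarithm. The only difference is presentational: you merge the two implications into one symmetric identity and explicitly justify the reduction to a spanning coframe, a point the paper uses implicitly in both directions.
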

\begin{proof}
We first suppose that $M_{t}$ is a $\nabla^{G}$-martingale. By definition of stochastic logarithm, 
\[
 \log M_{t} = \int_{0}^{t} \omega_{G} \delta M_{s}.
\]
Applying the formula of conversion (\ref{conversion}) we obtain
\[
 \log M_{t}  =  \int_{0}^{t} \omega_{G} d^{\nabla^{G}}M_{s} + \frac{1}{2}\int_{0}^{t} \nabla^{G} \omega_{G}(dM,dM)_{s}.
\]
Lemma \ref{le2} now assures that $\nabla^{G} \omega_{G}(dM,dM)_{t} = 0$, because the Maurer-Cartan is a left-invariant form. Thus
\[
 \log M_{t}  =  \int_{0}^{t} \omega_{G} d^{\nabla^{G}}M_{s}.
\] 
We observe that $\log M_{t}$ is a local martingale.  For $\theta \in \mathfrak{g}^{*}$ we have that
\[
 \int_{0}^{t} \theta \delta \log M_{t} =  \int_{0}^{t} \theta\omega_{G} d^{\nabla^{G}}M_{s}.
\]
From formula of conversion (\ref{conversion}) we see that 
\[
 \int_{0}^{t} \theta d^{\nabla^{\mathfrak{g}}}\log M_{t} + \frac{1}{2}\int \nabla^{\mathfrak{g}}\theta(d\log M_{s},d\log M_{s}) =  \int_{0}^{t} \theta\omega_{G} d^{\nabla^{G}}M_{s}.
\]
Lemma \ref{le2} leads to $\int \nabla^{\mathfrak{g}}\theta(d\log M_{s},d\log M_{s})= 0$. Thus
\[
 \int_{0}^{t} \theta d^{\nabla^{\mathfrak{g}}}\log M_{s} =  \int_{0}^{t} \theta\omega_{G} d^{\nabla^{G}}M_{s}.
\]
Since $\int_{0}^{t} \theta\omega_{G} d^{\nabla^{G}}M_{s}$ is a real local martingale, we conclude that $\log M_{t}$ is a $\nabla^{\mathfrak{g}}$-martingale.

Conversely, let $\theta$ be a left invariant 1-form in $G$. Using the formula of conversion (\ref{conversion}) and Lemma \ref{le2} leads to
\[
\int_{0}^{t} \theta d^{\nabla^{G}}M_{s}  =  \int_{0}^{t} \theta \delta M_{s}.
\]
Writing $\theta_{g} = \theta_{e}\circ \omega_{G} $ we obtain 
\[
\int_{0}^{t} \theta d^{\nabla^{G}}M_{s}  =  \int_{0}^{t} \theta_{e}\omega_{G} (\delta M_{s}).
\] 
By definition of logarithm,
\[
\int_{0}^{t} \theta d^{\nabla^{G}}M_{s} =  \int_{0}^{t} \theta_{e}\delta \log M_{s}.
\]
Applying the formula of conversion (\ref{conversion}) we see that
\[
\int_{0}^{t} \theta d^{\nabla^{G}}M_{s} =  \int_{0}^{t} \theta_{e}d^{\nabla^{\mathfrak{g}}}\log M_{s}  + \frac{1}{2}\int_{0}^{t} \nabla^{\mathfrak{g}}\theta_{e}(d\log M_{s},d\log M_{s}),
\]
being $\nabla^{\mathfrak{g}}$ the connection on $\mathfrak{g}$ yielded by connection function $\alpha$. From Lemma \ref{le2} it follows that $\nabla^{\mathfrak{g}}\theta_{e}=0$. Thus
\[
\int_{0}^{t} \theta d^{\nabla^{G}}M_{s} =  \int_{0}^{t} \theta_{e}d^{\nabla^{\mathfrak{g}}}\log M_{s}.
\]
Since $\log M_{t}$ is a $\nabla^{\mathfrak{g}}$-martingale, we conclude that $M_{t}$ is a $\nabla^{G}$-martingale.
\eop
\end{proof}

The next corollary is a direct consequence of theorem above, but it is not possible to show its converse with the tools that we are using here.

\begin{corollary}\label{cor1}
Let $G$ be a Lie group with a left invariant connection $\nabla^{G}$, which has a skew-symmetric connection function $\alpha$. If  $M_{t}$ is a $\nabla^{G}$-martingale, then $\log M_{t}$ is a local martingale in $\mathfrak{g}$.
\end{corollary}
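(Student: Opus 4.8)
The plan is to obtain the corollary directly from the forward half of Theorem \ref{teo1}, where the essential computation is already available. First I would recall that, since $M_{t}$ is a $\nabla^{G}$-martingale, the conversion formula (\ref{conversion}) applied to the Maurer-Cartan form together with Lemma \ref{le2} (which annihilates the symmetric part of $\nabla^{G}\omega_{G}$ because $\omega_{G}$ is left-invariant) yields
\[
\log M_{t} = \int_{0}^{t} \omega_{G}\, d^{\nabla^{G}} M_{s}.
\]
The quadratic correction term vanishes here precisely because of the skew-symmetry hypothesis encoded in Lemma \ref{le2}.

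Next I would make precise the meaning of ``local martingale in $\mathfrak{g}$''. Since $\mathfrak{g}$ is a finite-dimensional vector space, a $\mathfrak{g}$-valued semimartingale is a local martingale exactly when each of its coordinates with respect to a fixed basis is a real local martingale. Fixing a basis $\{e_{1},\dots,e_{n}\}$ of $\mathfrak{g}$ and writing the (left-invariant) Maurer-Cartan form as $\omega_{G} = \sum_{i} e_{i}\otimes \omega_{G}^{i}$ with real left-invariant $1$-forms $\omega_{G}^{i}$, the $i$-th coordinate of $\log M_{t}$ is $\int_{0}^{t} \omega_{G}^{i}\, d^{\nabla^{G}} M_{s}$. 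By the very definition of a $\nabla^{G}$-martingale, each of these real It\^o integrals is a real local martingale, and therefore $\log M_{t}$ is a local martingale in $\mathfrak{g}$. Equivalently, one may first invoke Theorem \ref{teo1} to get that $\log M_{t}$ is a $\nabla^{\mathfrak{g}}$-martingale and then read off the above identity; I would prefer the direct route since it reuses the integral representation already established.

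There is no genuine obstacle here, which is exactly why the statement is phrased as a corollary; the only points deserving care are the identification of a vector-space-valued local martingale with its coordinate processes, and the observation that $\omega_{G}$ splits into real left-invariant $1$-forms to which the defining property of a $\nabla^{G}$-martingale applies. I would also note, in line with the surrounding remark, why the converse resists the present tools: recovering ``$M_{t}$ is a $\nabla^{G}$-martingale'' from ``$\log M_{t}$ is a local martingale'' would, through Theorem \ref{teo1}, require upgrading the local-martingale property of $\log M_{t}$ to the $\nabla^{\mathfrak{g}}$-martingale property, i.e. controlling $\int \theta\, d^{\nabla^{\mathfrak{g}}}\log M$ for \emph{all} $1$-forms $\theta$ on $\mathfrak{g}$, whereas Lemma \ref{le2} only supplies the vanishing of the symmetric part of $\nabla^{\mathfrak{g}}\theta$ for the constant forms $\theta\in\mathfrak{g}^{*}$.
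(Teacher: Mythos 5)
Your proposal is correct and follows essentially the same route as the paper: the paper's proof of Theorem \ref{teo1} already establishes, via the conversion formula and Lemma \ref{le2}, the identity $\log M_{t} = \int_{0}^{t}\omega_{G}\,d^{\nabla^{G}}M_{s}$ and notes there that $\log M_{t}$ is a local martingale, which is precisely the observation the corollary records. Your componentwise justification (splitting $\omega_{G}$ into real left-invariant $1$-forms and invoking the definition of a $\nabla^{G}$-martingale) simply makes explicit what the paper leaves implicit.
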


\begin{example}
 Let $\alpha$ be the connection function null. Then, from Theorem \ref{teo1} we conclude that $M_{t}$ is a $\nabla^{G}$-martingale if and only if $\log M_{t}$ is local martingale in $\mathfrak{g}$. It was first proved by M. Arnaudon in \cite{arnaudon1}.
\end{example}

We know that there exists an one-to-one association between bi-invariant metrics on Lie group $G$ and $Ad_{G}$-invariant scalar products $<,>$ on Lie algebra $\mathfrak{g}$. We will use this to give the following characterization for Brownian motion in $G$.

\begin{theorem}\label{teo2}
Let $G$ be a Lie group whit a bi-invariant metric $k$. Let $B_{t}$ be a semimartingale in $G$. Then $B_{t}$ is a Brownian motion in $G$ if and only in $\log B_{t}$ is a $<,>$-Brownian motion in $\mathfrak{g}$.
\end{theorem}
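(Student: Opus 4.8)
The plan is to separate the definition of a $k$-Brownian motion into its two defining requirements---being a martingale for the Levi-Civita connection $\nabla^{k}$, and satisfying the quadratic/trace identity (\ref{Brownian})---and to treat each under the stochastic logarithm.

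First I would dispose of the martingale requirement by reducing it to Theorem \ref{teo1}. The crucial observation is that the Levi-Civita connection of a bi-invariant metric has connection function $\alpha(X,Y)=\frac{1}{2}[X,Y]$, which is skew-symmetric precisely because $\langle,\rangle$ is $Ad_{G}$-invariant. Hence $\nabla^{k}$ is a skew-symmetric left invariant connection, and Theorem \ref{teo1} applies: $B_{t}$ is a $\nabla^{k}$-martingale if and only if $\log B_{t}$ is a $\nabla^{\mathfrak{g}}$-martingale. It then remains to identify $\nabla^{\mathfrak{g}}$-martingales with $\langle,\rangle$-martingales in $\mathfrak{g}$. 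Since $\langle,\rangle$ is a constant scalar product on the vector space $\mathfrak{g}$, its Levi-Civita connection is the flat connection, whose connection function is null; as $\nabla^{\mathfrak{g}}$ differs from it by the skew term $\alpha$, the two connections have the same symmetric part, and by the argument of Lemma \ref{le2} (the martingale property depends only on the symmetric part of $\nabla\theta$ in the conversion formula (\ref{conversion})) they share the same martingales.

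Next I would handle the trace identity. The key is that the Maurer-Cartan form trivializes $TG$: at each $g$, $\omega_{G}|_{g}=L_{g^{-1}*}$, and left invariance of $k$ gives $k_{g}(u,v)=\langle\omega_{G}(u),\omega_{G}(v)\rangle$, so $\omega_{G}|_{g}$ is a linear isometry of $(T_{g}G,k_{g})$ onto $(\mathfrak{g},\langle,\rangle)$. Fixing a basis $\{e_{a}\}$ of $\mathfrak{g}$ with dual $\{e^{a}\}$ and writing $\omega_{G}=\sum_{a}\omega_{G}^{a}\otimes e_{a}$, the components of the logarithm are the real Stratonovich integrals $(\log B)^{a}=\int_{0}^{\cdot}\omega_{G}^{a}\,\delta B$. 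Since quadratic covariation ignores the Stratonovich correction, a local-coordinate computation on $G$ gives the bracket relation $d[(\log B)^{a},(\log B)^{b}]_{s}=(\omega_{G}^{a})_{i}(\omega_{G}^{b})_{j}(B_{s})\,d[B^{i},B^{j}]_{s}$, which is exactly the content of the quadratic It\^o formula (\ref{quadratic-ito}) read through $\omega_{G}$.

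Finally I would observe that condition (\ref{Brownian}), quantified over all $b$, is equivalent to the pointwise co-variation identity $d[B^{i},B^{j}]_{s}=k^{ij}(B_{s})\,ds$, where $k^{ij}$ is the inverse metric; this removes the apparent difficulty that $b$ lives on $G$ while the test tensors for $\log B$ live on $\mathfrak{g}$, since the requirement is really pointwise. Substituting this into the bracket relation and using that the isometry $\omega_{G}$ intertwines the inverse metrics, $(\omega_{G}^{a})_{i}(\omega_{G}^{b})_{j}k^{ij}=\langle,\rangle^{ab}$, yields $d[(\log B)^{a},(\log B)^{b}]_{s}=\langle,\rangle^{ab}\,ds$, which is precisely the trace identity for $\log B$ in $\mathfrak{g}$; choosing $\{e_{a}\}$ orthonormal makes this $\delta^{ab}\,ds$. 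The equivalence runs both ways because $\omega_{G}|_{g}$ is invertible, so the converse is obtained by the same computation read backwards. The main obstacle I anticipate is justifying the bracket relation for $\log B$ rigorously and carrying out the matrix-inverse identity $(\omega_{G}^{a})_{i}(\omega_{G}^{b})_{j}k^{ij}=\langle,\rangle^{ab}$ cleanly, together with the reduction of the universally quantified condition (\ref{Brownian}) to the pointwise co-variation statement.
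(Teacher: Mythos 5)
Your proposal is correct and takes essentially the same route as the paper: the martingale half is delegated to Theorem \ref{teo1} (your extra observation that $\nabla^{\mathfrak{g}}$ and the flat Levi--Civita connection of $\langle,\rangle$ differ by a skew-symmetric tensor and therefore have the same martingales fills a step the paper leaves implicit), and the trace identity is obtained by transporting the covariation of $B$ through the pointwise isometry $\omega_{G}|_{g}=L_{g^{-1}*}$, which is precisely the paper's computation written in components. No gaps.
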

\begin{proof}
We first observe that the Levi-Civita connection associated to metric $k$ is given by 
\[
\nabla^{k}_{X}Y = \frac{1}{2}[X,Y]
\]
for all $X,Y \in \mathfrak{g}$ (see for example \cite{arvani}). 

Suppose that $B_{t}$ is a $k$-Brownian motion. From definition and Theorem \ref{teo1} we know that $\log B_{t}$ is a $\nabla^{\mathfrak{g}}$-martingale in $\mathfrak{g}$, where $\nabla^{\mathfrak{g}}$ is connection generate by $\frac{1}{2}[\cdot,\cdot ]$. It remains to prove that $\int_{0}^{t}b(d\log B,d\log B)_{s} =\int_{0}^{t}\mathrm{tr}\, (\log B_{s}) ds$, where $b$ is a bilinear form in $\mathfrak{g}$. In fact, let $(x_{1},\ldots, x_{n})$ be a global coordinates system of $\mathfrak{g}$. Thus, we can write $b = b_{ij} dx^{i} \otimes dx^{j}$, where $b_{ij}$ are smooth functions on $\mathfrak{g}$. By definition,
\begin{eqnarray*}
\int_{0}^{t} b(d\log B,d\log B)_{s} 
& = & \int_{0}^{t} b_{ij}(\log B_{s}) [\log B_{s}^{i},\log B_{s}^{j}]\\
& = & \int_{0}^{t} b_{ij}(\log B_{s}) d \int_{0}^{s} [\log B_{r}^{i},\log B_{r}^{j}]\\
& = & \int_{0}^{t} b_{ij}(\log B_{s}) d \int_{0}^{t} dx^{i}\otimes dx^{j} (d\log B_{r}, d\log B_{r})\\
& = & \int_{0}^{t} b_{ij}(\log B_{s}) d \int_{0}^{t} dx^{i}\otimes dx^{j} (L_{B_{r}^{-1}*} dB_{r}, L_{B_{r}^{-1}*}dB_{r})\\
& = & \int_{0}^{t} b_{ij}(\log B_{s}) d \int_{0}^{t} dx^{i}\circ L_{B_{r}^{-1}*} \otimes dx^{j}\circ L_{B_{r}^{-1}*}(dB_{r},dB_{r}),
\end{eqnarray*}
where we used the Theorem 3.8 of \cite{emery1} in the second and third equality. Being $B_{t}$ a Brownian motion, 
\[
 \int_{0}^{t} b(d\log B,d\log B)_{s} = \int_{0}^{t} b_{ij}(\log B_{s}) d\int_{0}^{s} \mathrm{tr}\, (dx^{i}\circ L_{B_{r}^{-1}*} \otimes dx^{j}\circ L_{B_{r}^{-1}*})(B_{r}) dr.
\]
As $k$ is a bi-invariant metric we have 
\begin{eqnarray*}
 \int_{0}^{t} b(d\log B,d\log B)_{s} & = & \int_{0}^{t} b_{ij}(\log B_{s}) d\int_{0}^{s} k^{ij}(\log B_{r})dr \\
& = &  \int_{0}^{t} b_{ij}(\log B_{s}) k^{ij}(\log B_{s}) ds = \int_{0}^{t} \mathrm{tr}\, b(\log B_{s})ds,
\end{eqnarray*}
where $k^{ij}$ are the coeficients of inverse matrix $(<,>_{ij})$. Therefore $\log B_{t}$ is a $<,>$-Brownian motion in $\mathfrak{g}$.

Conversely, suppose that $\log B_{t}$ is a $<,>$-Brownian motion in $\mathfrak{g}$. It remains to prove (\ref{Brownian}). For each $b \in T^{(0,2)}(G)$,
\[
\int_{0}^{t} b(dB, dB)_{s} = \int_{0}^{t} b(L_{B_{s}*}L_{B_{s}^{-1}*} dB_{s},L_{B_{s}*}L_{B_{s}^{-1}*} dB_{s}).
\]
By definition of logarithm,
\[
\int_{0}^{t} b(dB, dB)_{s} = \int_{0}^{t} L_{B_{s}}^{*}b(d\log B,d\log B)_{s}.
\]
As $\log B_{t}$ is a $<,>$-Brownian motion we have
\[
\int_{0}^{t} b(dB, dB)_{s} = \int_{0}^{t} tr(L_{B_{s}}^{*}b)(\log B_{s})ds.
\]
Being $k$ bi-invariant metric, we have 
\[
\int_{0}^{t} b(dB, dB)_{s} = \int_{0}^{t} \mathrm{tr}\, b (B_{s})ds.
\]
Thus $B_{t}$ is a $k$-Brownian motion in $G$.
\eop
\end{proof}

As consequence of Theorem above, every $k$-Brownian motion in $G$ yields a standart Brownian motion in $\mathfrak{g}$, but, as Corollary \ref{cor1}, we can not show the converse with these arguments.

\begin{corollary}
Let $G$ be a Lie group whit a bi-invariant metric $k$. If $B_{t}$ is a $k$-Brownian motion in $G$, then $\log B_{t}$ is a Brownian motion in $\mathfrak{g}$.
\end{corollary}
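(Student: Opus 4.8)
The plan is to obtain this corollary directly from Theorem~\ref{teo2}, the only extra ingredient being the observation that a $<,>$-Brownian motion in the Lie algebra is exactly a standard (Euclidean) Brownian motion, since $\mathfrak{g}$ equipped with $<,>$ is a flat vector space. First I would apply Theorem~\ref{teo2}: because $B_{t}$ is a $k$-Brownian motion in $G$, the stochastic logarithm $\log B_{t}$ is a $<,>$-Brownian motion in $\mathfrak{g}$. By the definition of a $<,>$-Brownian motion this means that $\log B_{t}$ is a $\nabla^{<,>}$-martingale, where $\nabla^{<,>}$ is the Levi-Civita connection of $<,>$, and that for every section $b$ of $T^{(2,0)}\mathfrak{g}$ one has $\int_{0}^{t} b(d\log B, d\log B)_{s} = \int_{0}^{t} \mathrm{tr}\, b(\log B_{s})\, ds$.

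Next I would identify this Levi-Civita connection. Since $\mathfrak{g}$ is a finite-dimensional vector space and $<,>$ is a constant inner product in the global linear coordinates, its Levi-Civita connection is the trivial flat connection; hence the $\nabla^{<,>}$-martingale property of $\log B_{t}$ reduces to $\log B_{t}$ being an ordinary vector-valued local martingale. This is consistent with the proof of Theorem~\ref{teo2}, where $\log B_{t}$ was shown to be a martingale for the skew-symmetric connection generated by $\frac{1}{2}[\cdot,\cdot]$: that connection and the flat one share the same symmetric part, so by the reasoning of Corollary~\ref{cor1} they determine the same martingales.

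Finally I would conclude by reading off the Brownian property. A vector-valued local martingale in $(\mathfrak{g}, <,>)$ whose quadratic variation satisfies $\int_{0}^{t} b(d\log B, d\log B)_{s} = \int_{0}^{t} \mathrm{tr}\, b(\log B_{s})\, ds$ for all $b$ is precisely a standard Brownian motion in $\mathfrak{g}$; specializing $b$ to the constant tensors $dx^{i}\otimes dx^{j}$ yields $d[\log B^{i}, \log B^{j}]_{s} = \delta^{ij}\, ds$, the defining relation for a standard Brownian motion. Therefore $\log B_{t}$ is a Brownian motion in $\mathfrak{g}$. The only step requiring any care, and it is a mild one, is recognizing the flat connection as the Levi-Civita connection of $<,>$, which turns the phrase ``$<,>$-Brownian motion'' into the familiar Euclidean Brownian motion; no genuine obstacle arises, as the statement is essentially a reformulation of Theorem~\ref{teo2}.
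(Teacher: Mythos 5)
Your proposal is correct and follows essentially the same route as the paper: both reduce the claim to L\'evy's characterization by combining the local-martingale property of $\log B_{t}$ with the quadratic-variation identity $d[\log B^{i},\log B^{j}]_{s}=\delta^{ij}\,ds$ extracted from Theorem~\ref{teo2}. The only cosmetic difference is that the paper cites Corollary~\ref{cor1} and re-runs the computation from the first part of the proof of Theorem~\ref{teo2}, whereas you invoke the statement of Theorem~\ref{teo2} directly and make explicit (more carefully than the paper does) that the flat Levi-Civita connection of $<,>$ and the skew-symmetric connection generated by $\frac{1}{2}[\cdot,\cdot]$ share the same symmetric part and hence the same martingales.
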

\begin{proof}
It follows from Corollary \ref{cor1} that if $B_{t}$ is a  $\nabla^{k}$-martingale, where $\nabla^{k}$ is the Levi-Civita connection associated to metric $k$, then $\log B_{t}$ is a local martingale in $\mathfrak{g}$.  By Levi's characterization of $n$-dimensional Brownian motion remains to prove that $[\log B_{t}^{i},\log B_{t}^{j}]_{t}=\delta^{i}_{j}t$ (see \cite{protter} for more details). In fact, we make
\[
 [\log B_{t}^{i},\log B_{t}^{j}] = \int_{0}^{t} d[\log B_{s}^{i},\log B_{s}^{j}]
\]
and we apply the first part of the demonstration of Theorem \ref{teo2} to conclude the proof.
\eop
\end{proof}

As an application of Theorems \ref{teo1} and \ref{teo2} we prove the useful results. Someone will be able to show the next Proposition whit geometric arguments.

\begin{proposition}
Let $G$ be a Lie group and $H$ a Lie group with a left invariant connection $\nabla^{H}$, which has a skew-simmetric connection function $\alpha$ and $\varphi:H \rightarrow G$ an homorphism of Lie groups. We have the following assertions:
\begin{description}
 \item[(i)] If $G$ has a left invariant connection $\nabla^{G}$ such that its connection function is skew-symmetric and if $\varphi_{e*}$ commutes with $\alpha$, then every homomorphism $\varphi:H \rightarrow G$ is an affine map.
 \item[(ii)] If $G$ has a bi-invariant metric $k$ and if $\varphi_{e*}$ commutes with $\alpha$, then every homomorphism $\varphi:H \rightarrow G$ is a harmonic map.
\end{description}
\end{proposition}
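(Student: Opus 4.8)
The plan is to deduce both assertions from the two stochastic characterizations of affine and harmonic maps recalled at the end of Section~2, combined with Proposition~\ref{prop1} and Theorems~\ref{teo1} and~\ref{teo2}. The common mechanism is to transfer the problem from $H$ and $G$ to their Lie algebras $\mathfrak{h}$ and $\mathfrak{g}$ via the stochastic logarithm, where by Proposition~\ref{prop1} the map $\varphi$ becomes the linear map $\varphi_{e*}:\mathfrak{h}\to\mathfrak{g}$, and then to analyze $\varphi_{e*}$ as a map between the two vector spaces equipped with the linear connections $\nabla^{\mathfrak{h}}$ and $\nabla^{\mathfrak{g}}$ associated to the connection functions. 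The central step is algebraic: to compute the second fundamental form of $\varphi_{e*}$. Since $\mathfrak{h}$ and $\mathfrak{g}$ are vector spaces, $\beta_{\varphi_{e*}}$ is tensorial, so it suffices to evaluate it on constant (left-invariant) fields $X,Y$; using that $\nabla^{\mathfrak{g}}$ and $\nabla^{\mathfrak{h}}$ act on such fields through $\alpha_{G}$ and $\alpha$, one finds
\[
\beta_{\varphi_{e*}}(X,Y) = \alpha_{G}(\varphi_{e*}X,\varphi_{e*}Y) - \varphi_{e*}(\alpha(X,Y)),
\]
where $\alpha_{G}$ is the skew-symmetric connection function of $\nabla^{G}$ in (i), and $\alpha_{G}=\tfrac{1}{2}[\cdot,\cdot]$, the connection function of the Levi-Civita connection of $k$, in (ii). The hypothesis that $\varphi_{e*}$ commutes with $\alpha$ is precisely the vanishing of the right-hand side, so $\varphi_{e*}$ is an affine map between $(\mathfrak{h},\nabla^{\mathfrak{h}})$ and $(\mathfrak{g},\nabla^{\mathfrak{g}})$; in particular its tension field, being the trace of $\beta_{\varphi_{e*}}$, is zero, so $\varphi_{e*}$ is harmonic as well.

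For (i), I would take an arbitrary $\nabla^{H}$-martingale $M_{t}$ in $H$. By Theorem~\ref{teo1} its logarithm $\log M_{t}$ is a $\nabla^{\mathfrak{h}}$-martingale, and by Proposition~\ref{prop1} we have $\log\varphi(M_{t})=\varphi_{e*}\log M_{t}$. Since $\varphi_{e*}$ is affine, the characterization of affine maps shows it sends the $\nabla^{\mathfrak{h}}$-martingale $\log M_{t}$ to the $\nabla^{\mathfrak{g}}$-martingale $\varphi_{e*}\log M_{t}=\log\varphi(M_{t})$. Applying Theorem~\ref{teo1} once more, now for $G$, yields that $\varphi(M_{t})$ is a $\nabla^{G}$-martingale. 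Thus $\varphi$ carries $\nabla^{H}$-martingales to $\nabla^{G}$-martingales, and the characterization of affine maps gives that $\varphi$ is affine.

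For (ii), I would equip $H$ with a bi-invariant metric, with associated scalar product $<,>_{\mathfrak{h}}$, and take a Brownian motion $B_{t}$ in $H$. By Theorem~\ref{teo2}, $\log B_{t}$ is a $<,>_{\mathfrak{h}}$-Brownian motion in $\mathfrak{h}$, and by Proposition~\ref{prop1}, $\log\varphi(B_{t})=\varphi_{e*}\log B_{t}$. Because $\varphi_{e*}$ is affine, hence harmonic, the characterization of harmonic maps shows it sends the Brownian motion $\log B_{t}$ to the $\nabla^{\mathfrak{g}}$-martingale $\varphi_{e*}\log B_{t}$. Theorem~\ref{teo1} then gives that $\varphi(B_{t})$ is a $\nabla^{k}$-martingale, where $\nabla^{k}$ has the skew-symmetric connection function $\tfrac{1}{2}[\cdot,\cdot]$, so $\varphi$ carries Brownian motions to $\nabla^{k}$-martingales; by the characterization of harmonic maps, $\varphi$ is harmonic.

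The main obstacle I anticipate is the first step: correctly computing $\beta_{\varphi_{e*}}$ for the linear map between the two vector spaces carrying their (non-flat) linear connections, using the induced-connection formula~(\ref{secondform}), and pinning down the precise meaning of ``$\varphi_{e*}$ commutes with $\alpha$'' so that it matches the vanishing of $\beta_{\varphi_{e*}}$. A secondary point to handle with care in (ii) is that harmonicity requires a Riemannian structure on the domain; one must fix a bi-invariant metric on $H$ and verify that ``sending Brownian motions to $\nabla^{k}$-martingales'' is exactly the condition supplied by the characterization of harmonic maps, which in turn relies on $\alpha_{G}=\tfrac{1}{2}[\cdot,\cdot]$ being skew-symmetric so that Theorems~\ref{teo1} and~\ref{teo2} apply to $G$.
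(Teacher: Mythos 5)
Your proposal is correct and follows essentially the same route as the paper: transfer to the Lie algebras via the stochastic logarithm, use Proposition \ref{prop1} to get $\log\varphi(X_t)=\varphi_{e*}\log X_t$, use the commutation hypothesis to make $\varphi_{e*}$ affine between $(\mathfrak{h},\nabla^{\mathfrak{h}})$ and $(\mathfrak{g},\nabla^{\mathfrak{g}})$, and then invoke Theorems \ref{teo1}/\ref{teo2} together with the stochastic characterizations of affine and harmonic maps. The only differences are cosmetic: you spell out the computation $\beta_{\varphi_{e*}}(X,Y)=\alpha_G(\varphi_{e*}X,\varphi_{e*}Y)-\varphi_{e*}\alpha(X,Y)$ that the paper compresses into ``since $\varphi_{e*}$ commutes with $\alpha$, the It\^o geometric formula gives\dots'', and you are more careful than the paper about which group carries the Riemannian metric in part (ii).
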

\begin{proof}
{\bf (i)} 
Let $M_{t}$ be a $\nabla^{H}$-martingale in $H$. It is sufficient to show that $\varphi(M_{t})$ is a  $\nabla^{G}$- martingale. In fact, Theorem (\ref{teo2}) shows that $\log M_{t}$ is a $\nabla^{\mathfrak{h}}$-martingale in the Lie algebra $\mathfrak{h}$. By Proposition \ref{prop1}, 
\[
\log \varphi(M_{t})  = \varphi_{e*}(\log M_{t}).
\]
Since $\varphi_{e*}$ commute with $\alpha$, from It\^o geometric formula we deduce that  $\log \varphi(M_{t})$ is a $\nabla^{\mathfrak{g}}$-martingale in the Lie algbra $\mathfrak{g}$. Theorem \ref{teo1} shows that $\varphi(M_{t})$ is a $\nabla^{G}$-martingale.

\noindent{\bf (ii)}
Let $B_{t}$ be a $k$-Brownian motion in $G$. From stochastic characterization for harmonic maps is suficient to show that $\varphi(B_{t})$ is a $\nabla^{G}$- martingale. In fact, Theorem (\ref{teo1}) shows that $\log B_{t}$ is a $\nabla^{\mathfrak{h}}$-martingale. By Proposition \ref{prop1}, 
\[
\log \varphi(B_{t})  = \varphi_{e*}(\log B_{t}).
\]
Because $\varphi_{e*}$ commutes with $\alpha$, the It\^o formula assures that $\log \varphi(B_{t})$ is a $\nabla^{\mathfrak{g}}$-martingale. Theorem \ref{teo1} shows that $\varphi(B_{t})$ is a $\nabla^{G}$-martingale.
\end{proof}

\begin{example}
Let $G, H$ be two Lie groups. If we equippe $G$ whit a connection $\nabla^{G}_{X}Y = c_{1}[X,Y]$ for some $c_{1} \in [0,1]$ and, $X,Y \in \mathfrak{g}$, and if we endow $H$ with a connection $\nabla^{H}_{\tilde{X}}\tilde{Y} = c_{2}[\tilde{X},\tilde{Y}]$ for some $c_{2} \in [0,1]$, $\tilde{X},\tilde{Y} \in \mathfrak{h}$, then every homomorphism of Lie groups $\varphi:H \rightarrow G$ is an affine map. When $H$ has a bi-invariant metric, every homomorphism of Lie groups $\varphi:H \rightarrow G$ is a harmonic map.
\end{example}

The next Lemma is necessary in the proof the Pluzhnikov's Theorem. We observe that it is true, because we work in the Lie algebra context.

\begin{lemma}\label{le3}
Let $(M,g)$ be a Riemannian manifold, $G$ a Lie group, $\omega_{G}$ the Maurrer-Cartan form on $G$ and $F:M \rightarrow G$ a smooth map. Then 
\[
 d^{*}F^{*}\omega_{G}^{*}\theta =  \theta d^{*}F^{*}\omega_{G},
\]
for every $\theta$ 1-form on $\mathfrak{g}$, where $d^{*}$ is the co-differential operator on $M$.
\end{lemma}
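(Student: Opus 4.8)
The plan is to reduce the identity to the elementary fact that the co-differential $d^{*}$ is a linear operator acting component-wise on $\mathfrak{g}$-valued forms, while $\theta\in\mathfrak{g}^{*}$ is a fixed linear functional with constant coefficients. First I would unravel the notation: $\omega_{G}^{*}\theta$ denotes the ordinary real-valued $1$-form $\theta\circ\omega_{G}$ on $G$ obtained by composing the $\mathfrak{g}$-valued Maurer-Cartan form with $\theta$. Pulling back by $F$ and evaluating on $v\in T_{p}M$ gives $(F^{*}\omega_{G}^{*}\theta)(v)=\theta(\omega_{G}(F_{*}v))=\theta((F^{*}\omega_{G})(v))$, so that $F^{*}\omega_{G}^{*}\theta=\theta\circ F^{*}\omega_{G}$ as real $1$-forms on $M$.

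Next I would fix a basis $e_{1},\ldots,e_{n}$ of $\mathfrak{g}$ and write the $\mathfrak{g}$-valued $1$-form as $F^{*}\omega_{G}=\sum_{a}\eta^{a}\,e_{a}$, where each $\eta^{a}$ is an ordinary $1$-form on $M$. With this decomposition the left-hand side becomes $\theta\circ F^{*}\omega_{G}=\sum_{a}\theta(e_{a})\,\eta^{a}$, a real $1$-form whose coefficients $\theta(e_{a})$ are constants, whereas the right-hand side is $\theta(d^{*}F^{*}\omega_{G})=\theta\big(\sum_{a}(d^{*}\eta^{a})\,e_{a}\big)=\sum_{a}\theta(e_{a})\,d^{*}\eta^{a}$, using that $d^{*}$ is applied in each component of the $\mathfrak{g}$-valued form.

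Then the identity follows at once from linearity of $d^{*}$, namely $d^{*}\big(\sum_{a}\theta(e_{a})\eta^{a}\big)=\sum_{a}\theta(e_{a})\,d^{*}\eta^{a}$, since the scalars $\theta(e_{a})$ may be pulled outside the operator. Both sides therefore coincide, and the computation is manifestly independent of the chosen basis because $\mathfrak{g}$ is a fixed vector space, so that the component-wise action of $d^{*}$ is intrinsically defined.

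I expect no serious obstacle here. The only point requiring care is the bookkeeping of the notation ($\omega_{G}^{*}\theta$ versus $F^{*}\omega_{G}$, and the meaning of $d^{*}$ on vector-valued forms), and the conceptual content is exactly the remark preceding the lemma: because we operate in the vector space $\mathfrak{g}$, the functional $\theta$ has constant coefficients and commutes with the co-differential. Were the target a curved manifold rather than a Lie algebra, this interchange would fail, which is precisely why the statement is tied to the Lie-algebra context.
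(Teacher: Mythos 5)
Your proof is correct and rests on exactly the same observation as the paper's: $\theta$ is a constant-coefficient linear functional on the fixed vector space $\mathfrak{g}$, so it commutes with the (componentwise, linear) action of $d^{*}$. The paper reaches this by unwinding $d^{*}$ through an orthonormal frame on $M$ and the dual connection and then pulling $\theta$ outside by linearity, while you decompose $F^{*}\omega_{G}$ in a basis of $\mathfrak{g}$ and pull out the constants $\theta(e_{a})$ --- a purely cosmetic difference.
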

\begin{proof}
From definition of co-differential $d^{*}$, for any orthonormal frame field $\{e_{1},\ldots, e_{n}\}$ on $M$, we have
\[
 d^{*}F^{*}\omega_{G}^{*}\theta = -\sum_{i=1}^{n} (\nabla^{g}_{e_{i}}F^{*}\omega_{G}^{*}\theta)(e_{i}),
\]
where $\nabla^{g}$ is the Levi-Civita connection associated to metric $g$. By definition of dual connection,
\begin{eqnarray*}
d^{*}F^{*}\omega_{G}^{*}\theta 
& = & -\sum_{i=1}^{n}( \nabla^{g}_{e_{i}}(F^{*}\omega_{G}^{*}\theta(e_{i})) - F^{*}\omega_{G}^{*}\theta(\nabla^{g}_{e_{i}}e_{i}))\\
& = & -\sum_{i=1}^{n}(e_{i}\theta(F^{*}\omega_{G}(e_{i})) - \theta(F^{*}\omega_{G}\nabla^{g}_{e_{i}}e_{i}))\\
\end{eqnarray*}
Since $\theta:\mathfrak{g} \rightarrow \mathbb{R}$ is a linear application, we obtain
\begin{eqnarray*}
d^{*}F^{*}\omega_{G}^{*}\theta 
& = & \theta(-\sum_{i=1}^{n}(\nabla^{g}_{e_{i}}(F^{*}\omega_{G}(e_{i})) - F^{*}\omega_{G}^{*}\nabla^{g}_{e_{i}}e_{i}))\\
& = & \theta(d^{*}F^{*}\omega_{G}),
\end{eqnarray*}
where we used the definition of co-differential in the last equality.
\eop
\end{proof}

Finally, we will prove a version of Pluzhnikov's Theorem (see \cite{pluzhnikov}) to skew-symmetric connections. 

\begin{theorem}
 Let $M$ be a Riemannian manifold, $G$ a Lie group with a left invariant connection $\nabla^{G}$ such that its connection function $\alpha$ is skew-symmetric, $\omega_{G}$ the Maurer-Cartan form on $G$ and $F:M \rightarrow G$ a smooth map. Then $F$ is harmonic if and only if
\[
 d^{*}F^{*}\omega_{G} = 0,
\]
where $d^{*}$ is the co-differential operator on $M$.
\end{theorem}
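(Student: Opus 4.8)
The plan is to run the whole argument through the stochastic characterization of harmonicity: by item (ii) of the stochastic characterizations in Section 2, $F$ is harmonic if and only if it sends every $g$-Brownian motion $B_t$ in $M$ to a $\nabla^{G}$-martingale $F(B_t)$ in $G$. I would then translate the $\nabla^{G}$-martingale property of $F(B_t)$ into a property of its stochastic logarithm via Theorem \ref{teo1}, which asserts that $F(B_t)$ is a $\nabla^{G}$-martingale precisely when $\log F(B_t)$ is a $\nabla^{\mathfrak{g}}$-martingale. So I fix an arbitrary $g$-Brownian motion $B_t$ and analyze when $\log F(B_t)$ is a $\nabla^{\mathfrak{g}}$-martingale.

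First I would compute the logarithm. By definition $\log F(B_t) = \int_0^t \omega_{G}\,\delta F(B_s)$, and the It\^o formula for Stratonovich integrals (\ref{stratonovich-ito}) gives $\log F(B_t) = \int_0^t F^{*}\omega_{G}\,\delta B_s$. Next, for a constant form $\theta \in \mathfrak{g}^{*}$ I would examine $\int_0^t \theta\, d^{\nabla^{\mathfrak{g}}}\log F(B_s)$. Converting to a Stratonovich integral by (\ref{conversion}) and invoking Lemma \ref{le2}, which makes the symmetric part of $\nabla^{\mathfrak{g}}\theta$ vanish, the quadratic correction term drops out, so $\int_0^t \theta\, d^{\nabla^{\mathfrak{g}}}\log F(B_s) = \int_0^t \theta\,\delta \log F(B_s)$. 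Since $\theta$ is linear I can pull it through the integral to rewrite this as the real Stratonovich integral $\int_0^t F^{*}\omega_{G}^{*}\theta\,\delta B_s$.

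Now I would apply Manabe's formula (\ref{manabe}) to this real integral:
\[
\int_0^t F^{*}\omega_{G}^{*}\theta\,\delta B_s = \int_0^t F^{*}\omega_{G}^{*}\theta\, d^{\nabla^{M}}B_s + \frac{1}{2}\int_0^t d^{*}\bigl(F^{*}\omega_{G}^{*}\theta\bigr)(B_s)\,ds.
\]
The first term is a real local martingale because $B_t$, being a Brownian motion, is a $\nabla^{g}$-martingale. By Lemma \ref{le3} the integrand of the second term equals $\theta\bigl(d^{*}F^{*}\omega_{G}\bigr)$. Hence $\int_0^t \theta\, d^{\nabla^{\mathfrak{g}}}\log F(B_s)$ differs from a local martingale exactly by the finite-variation drift $\frac{1}{2}\int_0^t \theta\bigl(d^{*}F^{*}\omega_{G}\bigr)(B_s)\,ds$.

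Finally I would read off the equivalence. If $d^{*}F^{*}\omega_{G} = 0$, the drift vanishes, so the It\^o integral above is a local martingale for every $\theta$; thus $\log F(B_t)$ is a $\nabla^{\mathfrak{g}}$-martingale, Theorem \ref{teo1} makes $F(B_t)$ a $\nabla^{G}$-martingale for every Brownian motion, and characterization (ii) yields harmonicity of $F$. Conversely, if $F$ is harmonic then $F(B_t)$ is a $\nabla^{G}$-martingale and $\log F(B_t)$ a $\nabla^{\mathfrak{g}}$-martingale, so by uniqueness in the Doob-Meyer decomposition the absolutely continuous drift must vanish, forcing $\theta\bigl(d^{*}F^{*}\omega_{G}\bigr)(B_s)=0$ for every $\theta$ along every Brownian path. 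The main obstacle, and the step deserving the most care, is precisely this last deduction: passing from vanishing of the drift along all Brownian motions to the pointwise identity $d^{*}F^{*}\omega_{G}\equiv 0$. Because a $g$-Brownian motion may be started at any point of $M$ and has full support, the continuity of $\theta\bigl(d^{*}F^{*}\omega_{G}\bigr)$ lets me conclude it vanishes everywhere, and since $\theta \in \mathfrak{g}^{*}$ is arbitrary, $d^{*}F^{*}\omega_{G}=0$.
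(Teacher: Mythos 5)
Your proposal is correct and follows essentially the same route as the paper: the stochastic characterization of harmonicity, Theorem \ref{teo1} to pass to $\log F(B_t)$, the conversion formula with Lemma \ref{le2} to kill the quadratic term, Manabe's formula, Lemma \ref{le3} to identify the drift as $\theta(d^{*}F^{*}\omega_{G})$, and Doob--Meyer to read off the equivalence. Your only departure is organizational --- you run one computation and extract both implications from it, and you are more explicit than the paper about why vanishing of the drift along all Brownian motions forces the pointwise identity --- but the substance is identical.
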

\begin{proof}
Suppose that $F$ is a harmonic map. From stochastic characterization for harmonic maps we have, for every $g$-Brownian motion $B_{t}$ in $M$, that $F(B_{t})$ is a $\nabla^{G}$-martingale in $G$. From Theorem \ref{teo1} we see that $\log F(B_{t})$ is a $\nabla^{\mathfrak{g}}$-martingale, where $\nabla^{\mathfrak{g}}$ is the connection given by $\alpha$ in $\mathfrak{g}$. Let $\theta$ be a 1-form on $\mathfrak{g}$. From formula of conversion (\ref{conversion}) we deduce that
\begin{eqnarray*}
 \int_{0}^{t} \theta d^{\nabla^{\mathfrak{g}}}\log F(B_{s}) \!\!\!
& = & \!\!\! \int_{0}^{t} \theta \delta \log F(B_{s}) -\frac{1}{2} \int_{0}^{t} \nabla^{\mathfrak{g}}\theta(d\log F(B_{s}),d\log F(B_{s}))\\
& = & \int_{0}^{t} \omega_{G}^{*} F^{*}\theta \delta B_{s} -\frac{1}{2} \int_{0}^{t} \nabla^{\mathfrak{g}}\theta(d\log F(B_{s}),d\log F(B_{s})),
\end{eqnarray*}
where we used the definition of stochastic logarithm and property (\ref{stratonovich-ito}) in the second equality. Because $\nabla^{\mathfrak{g}}$ is given by $\alpha$ and $\alpha$ is skew-symmetric Lemma \ref{le2} assures that
\[
 \int_{0}^{t} \theta d^{\nabla^{\mathfrak{g}}}\log F(B_{s}) = \int_{0}^{t} \omega_{G}^{*}F^{*}\theta \delta B_{s}.
\]
Manabe's formula (\ref{manabe}) now yields
\[
 \int_{0}^{t} \theta d^{\nabla{\mathfrak{g}}}\log F(B_{s}) 
 = \int_{0}^{t} F^{*}\omega_{G}^{*}\theta d^{\nabla^{g}} B_{s} + \frac{1}{2}\int_{0}^{t} d^{*}\omega_{G}^{*}F^{*}\theta (B_{s}) ds.
\]
Since $\log F(B_{t})$ is a $\nabla^{\mathfrak{g}}$-martingale, from Doob-Meyer decomposition (see for instance \cite{protter}) we deduce that
\[
\int_{0}^{t} d^{*}F^{*}\omega_{G}^{*}\theta(B_{s}) dt = 0.
\]
Since $B_{s}$ is an arbitrary $g$-Brownian motion, it follows that $d^{*}F^{*}\omega_{G}^{*}\theta = 0$, where $d^{*}$ is the co-differential operator on $M$. From Lemma \ref{le3} we see that $\theta(d^{*}F^{*}\omega_{G})$ = 0. Being $\theta$ an arbitrary 1-form on $\mathfrak{g}$, we conclude that 
\[
d^{*}F^{*}\omega_{G} = 0.
\]

Conversely, suppose that $d^{*}F^{*}\omega_{G}^{*}= 0$. We want to show, for every $g$-Brownian motion $B_{s}$ in $M$, that $F(B_{s})$ is a $\nabla^{G}$-martingale in $G$. To this end, we will show that $\log F(B_{s})$ is a $\nabla^{\mathfrak{g}}$-martingale in $\mathfrak{g}$ and we will conclude from Theorem \ref{teo1} our assertion. In fact, for $\theta \in \mathfrak{g}^{*}$ we can repeat to arguments above and to obtain
\begin{eqnarray*}
\int_{0}^{t} \theta d^{\nabla^{\mathfrak{g}}}\log F(B_{s}) 
& = & \int_{0}^{t} F^{*}\omega^{*}\theta d^{\nabla^{g}} B_{s} + \frac{1}{2}\int_{0}^{t} d^{*}\omega^{*}F^{*}\theta(B_{s}) ds.
\end{eqnarray*}
From Lemma \ref{le3} and the hypothesis we get
\[
 \int_{0}^{t} \theta d^{\nabla^{\mathfrak{g}}}\log F(B_{s}) 
 = \int_{0}^{t} F^{*}\omega^{*}\theta d^{\nabla^{g}} B_{s} .
\]
Because $B_{t}$ is a $g$-Brownian motion, by definition, $\int_{0}^{t} \theta d^{\nabla^{\mathfrak{g}}}\log F(B_{s})$ is a local martingale. Furthermore,  $\log F(B_{s})$ is a $\nabla^{\mathfrak{g}}$-martingale in $\mathfrak{g}$, and the proof follows.
\eop
\end{proof}

\end{document}